\newcounter{Theorem}
\numberwithin{Theorem}{section}
\newtheorem{theorem}[Theorem]{Theorem}
\newtheorem{proposition}[Theorem]{Proposition}
\newtheorem*{theorem*}{Theorem}
\newtheorem*{proposition*}{Proposition}
\newtheorem*{lemma*}{Lemma}
\theoremstyle{definition}
\newtheorem*{remark*}{Remark}
\newtheorem{example}{Example}
\DeclareMathOperator*{\argmin}{arg\,min}
\title{A Greedy Version of the Frame Algorithm}
\author{Brody Dylan Johnson}
\email{brody.johnson@slu.edu}
\address{Department of Mathematics and Statistics, Saint Louis University, St. Louis, Missouri 63103, USA}
\date{\today}
\begin{document}

\begin{abstract}
The frame algorithm uses a simple recursive formula to approximate an unknown vector from its frame coefficients.  This note introduces an adaptive version of the frame algorithm that maximizes the error reduction between steps in terms of an equivalent norm.  This greedy version of the frame algorithm is proven to achieve the same guaranteed convergence rate as the traditional frame algorithm, yet, unlike its classical counterpart, does not require knowledge of the frame bounds.  The robustness of the greedy frame algorithm with respect to noisy measurements is also established.  Two numerical examples are included to demonstrate the benefit of the greedy algorithm in applications.
\end{abstract}

\keywords{frame, frame algorithm, greedy algorithm}
\subjclass{42C15, 42A65, 65J10}

\maketitle

\section{Introduction} \label{section:1}

Recall that a collection $\lbrace x_{j} \rbrace_{j\in J}$ is a \emph{frame} for a separable Hilbert space $\mathbb{H}$ if there exist positive constants $A$ and $B$ such that 
$$ A \Vert x\Vert^{2} \le \sum_{j\in J} \vert \langle x, x_{j} \rangle \vert^{2} \le B \Vert x\Vert^{2} \qquad \text{for all} \quad x\in \mathbb{H}. $$

\noindent
The constants $A$ and $B$, respectively, are referred to as the lower and upper frame bounds.  A fundamental problem in frame theory, referred to here as the \emph{frame inversion problem}, regards the reconstruction of a vector $x$ from its frame coefficients, $\lbrace \langle x, x_{j} \rangle \rbrace_{j\in J}$.  Often, the frame coefficients may be regarded as measurements of an unknown signal $x$, leading naturally to related problems in which an unknown signal must be approximated from corrupted measurements.  Duffin and Schaeffer provided one solution of the frame inversion problem via successive approximation in the same paper in which they introduced frame theory to the mathematical literature \cite[Theorem III]{DuffinSchaeffer1952}.  A modest generalization of Duffin and Schaeffer's original recursive scheme, including a relaxation parameter $\alpha$, is stated below.

\begin{theorem*}[Frame Algorithm \cite{Grochenig1993}]
Let $\lbrace x_{j} \rbrace_{j\in J}$ be a frame for a separable Hilbert space $\mathbb{H}$ with bounds $0<A\le B<\infty$.  Fix $x\in \mathbb{H}$.  Let $y_{0}=0$ and, for $n\ge 0$, define
\begin{equation} \label{eq:frame-algorithm}
y_{n+1} = y_{n} + \alpha \sum_{j\in J} \langle x-y_{n}, x_{j} \rangle x_{j}. 
\end{equation}

\noindent
If $0<\alpha < \frac{2}{B}$, it follows that $\Vert x- y_{n}\Vert \le C_{\alpha}^{n} \Vert x \Vert$, where $C_{\alpha}: = \max{\lbrace \vert 1-\alpha A\vert, \vert 1-\alpha B\vert \rbrace}$. In particular, the optimal constant $C_{\alpha}$ is $\frac{B-A}{B+A}$ and occurs when $\alpha = \frac{2}{A+B}$.
\end{theorem*}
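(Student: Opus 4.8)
The plan is to recast the recursion \eqref{eq:frame-algorithm} in terms of the \emph{frame operator} $S\colon\mathbb{H}\to\mathbb{H}$, $Sx=\sum_{j\in J}\langle x,x_{j}\rangle x_{j}$. First I would record the standard facts that $S$ is a bounded, self-adjoint, positive operator satisfying $A\,I\preceq S\preceq B\,I$ in the operator order, which is merely a restatement of the frame inequalities; in particular the spectrum of $S$ is contained in $[A,B]$.

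Next I would subtract $x$ from both sides of \eqref{eq:frame-algorithm}. Writing $e_{n}=x-y_{n}$, the sum $\sum_{j\in J}\langle x-y_{n},x_{j}\rangle x_{j}$ is exactly $Se_{n}$, so the recursion becomes $e_{n+1}=e_{n}-\alpha Se_{n}=(I-\alpha S)e_{n}$. Since $y_{0}=0$ gives $e_{0}=x$, an immediate induction yields $e_{n}=(I-\alpha S)^{n}x$, and hence $\Vert x-y_{n}\Vert=\Vert(I-\alpha S)^{n}x\Vert\le\Vert I-\alpha S\Vert^{n}\Vert x\Vert$.

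The one genuine computation is to identify $\Vert I-\alpha S\Vert$. Because $S$ is self-adjoint with spectrum in $[A,B]$, the operator $I-\alpha S$ is self-adjoint with spectrum in $[1-\alpha B,\,1-\alpha A]$, so the spectral radius formula for self-adjoint operators gives $\Vert I-\alpha S\Vert=\max\{\vert 1-\alpha A\vert,\vert 1-\alpha B\vert\}=C_{\alpha}$. (If one prefers to avoid spectral theory, the same estimate follows by bounding $\langle(I-\alpha S)x,x\rangle$ with the frame inequalities and using $\Vert T\Vert=\sup_{\Vert x\Vert=1}\vert\langle Tx,x\rangle\vert$ for self-adjoint $T$.) A short check shows that $0<\alpha<\tfrac{2}{B}$ forces both $\vert 1-\alpha A\vert<1$ and $\vert 1-\alpha B\vert<1$, so $C_{\alpha}<1$ and the bound $\Vert x-y_{n}\Vert\le C_{\alpha}^{n}\Vert x\Vert$ indeed yields convergence.

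Finally I would optimize $C_{\alpha}$ over $\alpha\in(0,\tfrac{2}{B})$: on this interval $\alpha\mapsto 1-\alpha A$ is positive and decreasing, while $\alpha\mapsto 1-\alpha B$ is decreasing and changes sign at $\alpha=\tfrac{1}{B}$, so the pointwise maximum of $\vert 1-\alpha A\vert$ and $\vert 1-\alpha B\vert$ is minimized where $1-\alpha A=-(1-\alpha B)$, i.e.\ at $\alpha=\tfrac{2}{A+B}$, where the common value is $\tfrac{B-A}{B+A}$. I expect no serious obstacle here; the only step demanding care is the norm identity for $I-\alpha S$, which is precisely where the self-adjointness and positivity of the frame operator are essential.
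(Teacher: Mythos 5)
Your argument is correct and is essentially the standard one that the paper itself relies on but does not reprint (the theorem is quoted from Gr\"ochenig; the surrounding text in Section \ref{section:2} sketches exactly this route): rewrite the recursion as $x-y_{n+1}=(I-\alpha S)(x-y_{n})$, bound $\Vert I-\alpha S\Vert$ by $C_{\alpha}$ using the self-adjointness of $S$ and the frame inequalities, and then optimize over $\alpha$. The only cosmetic slip is the claim that $1-\alpha A$ remains positive on all of $(0,\tfrac{2}{B})$ --- it need not when $A>B/2$ --- but since $\tfrac{2}{A+B}\le\tfrac{1}{A}$ the balancing point $1-\alpha A=-(1-\alpha B)$ still identifies the minimizer, so the conclusion stands.
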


The frame algorithm should be regarded a special case of Richardson's method \cite{DMYoung1980} in which the convergence criterion and error estimate stem from functional analytic properties of the associated frame.  In real world applications, the frame algorithm is susceptible to several potential hazards, chief among them being the fact that selection of a near-optimal relaxation parameter relies on precise estimates of both the lower and upper frame bounds.  Another shortcoming of the frame algorithm arises when $B/A>>1$, which results in a poor convergence rate even when the optimal frame bounds are known.  Gr\"ochenig addressed these concerns by leveraging polynomial acceleration methods from numerical linear algebra in the creation of two reconstruction algorithms based on two-step recurrence formulas \cite{Grochenig1993}.  The Chebyshev algorithm \cite[Theorem 1]{Grochenig1993} uses a static recurrence relation to improve the rate of convergence given knowledge of the frame bounds, while the conjugate-gradient algorithm \cite[Theorem 2]{Grochenig1993} implements an adaptive recurrence relation that provides an improved rate of convergence in terms of an equivalent norm without requiring knowledge of the frame bounds.

The goal of this work is to introduce an adaptive, single-step version of the frame algorithm that achieves the same guaranteed rate of convergence as the classical frame algorithm without requiring knowledge of the frame bounds.  There are two primary motivations for this endeavor.  First, such an algorithm will provide a simpler alternative to the conjugate-gradient algorithm for applications in which estimates of the frame bounds are not readily available.  Second, a frame algorithm with these properties may be used to improve upon existing reconstruction algorithms associated with certain nonlinear frame inversion problems, e.g., the $\lambda$-saturated frame algorithm arising in the study of saturation recovery \cite{AFGJR2025}. 

The remainder of this note is organized as follows.  Section \ref{section:2} introduces notation and background information that will be used in subsequent sections.  Section \ref{section:3} includes a description of a greedy version of the frame algorithm along with one of the main results of this work, which establishes the convergence and error bound for the greedy frame algorithm.  In Section \ref{section:4}, the second main result, establishing the robustness of the greedy frame algorithm, is proven.  This theorem ensures that the greedy algorithm can be employed successfully with noisy measurements.  Finally, Section 5 presents numerical examples to illustrate the performance of the greedy frame algorithm in comparison to its non-adaptive counterparts.

\section{Notation and Preliminaries} \label{section:2}

Throughout this note, $\mathbb{H}$ will denote a separable, real or complex Hilbert space.  It will be convenient to establish notation for several related operators associated with a frame $\lbrace x_{j} \rbrace_{j\in J}$ of $\mathbb{H}$, where the index set $J$ is assumed to be countable.  The \emph{analysis operator} $T:\mathbb{H} \mapsto \ell^{2}(J)$ is defined by
$$ Tx = \lbrace \langle x,x_{j}\rangle \rbrace_{j\in J}$$

\noindent
and its adjoint, the \emph{synthesis operator}, $T^{*}:\ell^{2}(J) \mapsto \mathbb{H}$ is given by
$$ T^{*} c = \sum_{j\in J} c_{j} x_{j}.$$

\noindent
The \emph{frame operator} $S:\mathbb{H} \mapsto \mathbb{H}$ is the composition of the synthesis and analysis operators, i.e.,
$$ S x = T^{*} T x = \sum_{j\in J} \langle x,x_{j} \rangle x_{j}.$$

The frame operator associated with any frame must be invertible and, thus, the frame inversion problem is morally equivalent to the problem of inverting the frame operator.  At the heart of the frame algorithm is the fact that $\Vert I-\alpha S \Vert < 1$ for $0<\alpha < \frac{2}{B}$, which implies that $\alpha S$ and, consequently, $S$ are invertible operators.  In particular, this observation motivates an expression for $S^{-1}$ based on the Neumann series for $(\alpha S)^{-1}$, namely,
$$ S^{-1} = \alpha (\alpha S)^{-1} = \alpha \sum_{n=0}^{\infty} (I-\alpha S)^{n}.$$

\noindent
It should be no surprise that the approximations stemming from the frame algorithm can be related to partial sums of this series.

\begin{proposition*}
Let $S$ be the frame operator associated with a frame $\lbrace x_{j} \rbrace_{j\in J}$ for a separable Hilbert space $\mathbb{H}$.  Fix $0<\alpha < \frac{2}{B}$ and let $y_{0}=0$.  Define, for $n\ge 1$,
$$ F_{n} = \alpha \sum_{k=0}^{n-1} (I-\alpha S)^{k}  \qquad \text{and} \qquad y_{n+1} = y_{n} + \alpha S(x-y_{n}). $$

\noindent
Then, $y_{n} = F_{n} Sx$ for each integer $n\ge 1$.
\end{proposition*}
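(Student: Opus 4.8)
The plan is to proceed by induction on $n$, exploiting the operator identity that governs how the residual $x - y_n$ evolves. First I would check the base case $n = 1$: since $y_0 = 0$, the recursion gives $y_1 = y_0 + \alpha S(x - y_0) = \alpha S x$, while $F_1 S x = \alpha (I - \alpha S)^0 S x = \alpha S x$, so the two agree.

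For the inductive step, assume $y_n = F_n S x$. Rewriting the recursion as $y_{n+1} = (I - \alpha S) y_n + \alpha S x$ and substituting the inductive hypothesis yields $y_{n+1} = \bigl[ (I - \alpha S) F_n + \alpha I \bigr] S x$, so it suffices to show $(I - \alpha S) F_n + \alpha I = F_{n+1}$. Expanding $(I - \alpha S) F_n = \alpha \sum_{k=0}^{n-1} (I - \alpha S)^{k+1} = \alpha \sum_{k=1}^{n} (I - \alpha S)^k$ and adding $\alpha I = \alpha (I - \alpha S)^0$ gives $\alpha \sum_{k=0}^{n} (I - \alpha S)^k = F_{n+1}$, which closes the induction.

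Alternatively, and perhaps more transparently, one can observe that $x - y_{n+1} = (I - \alpha S)(x - y_n)$, so $x - y_n = (I - \alpha S)^n x$ and hence $y_n = \bigl[ I - (I - \alpha S)^n \bigr] x$; since every operator in sight is a polynomial in $S$ and therefore commutes, the finite geometric series identity $\alpha S \sum_{k=0}^{n-1} (I - \alpha S)^k = \bigl[ I - (I - \alpha S) \bigr] \sum_{k=0}^{n-1} (I - \alpha S)^k = I - (I - \alpha S)^n$ shows $F_n S x = \bigl[ I - (I - \alpha S)^n \bigr] x = y_n$.

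There is no serious obstacle: the argument is routine. The only points requiring a modicum of care are the shift of summation index in the inductive step (equivalently, the telescoping in the geometric-series form) and the remark that $\alpha S$, $I - \alpha S$, and their powers mutually commute, which is what permits $\alpha S$ to be moved freely through the partial sum defining $F_n$. All sums here are finite, so no convergence considerations enter.
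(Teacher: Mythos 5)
Your inductive argument is correct and is essentially the same as the paper's proof, which also verifies the base case $y_1 = \alpha S x = F_1 S x$ and then uses the identity $F_{n+1} = \alpha I + (I-\alpha S)F_n$ to close the induction. Your alternative closed-form derivation via the telescoping geometric series is also valid, but the main argument matches the paper's.
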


\begin{proof}
Observe that $F_{1} =\alpha I$ so $F_{1} Sx = \alpha Sx$, while $y_{1} = y_{0} + \alpha S(x-y_{0}) = \alpha Sx$.  Thus the claimed identity holds when $n=1$.  Assume that the identity holds for $1\le k\le n$.  Observe that 
$$ F_{n+1} S x = \left ( \alpha I + (I-\alpha S) F_{n} \right ) Sx = \alpha Sx + (I-\alpha S) y_{n} = y_{n} + \alpha S(x-y_{n}) = y_{n+1},$$

\noindent
which verifies the induction step and completes the proof.
\end{proof}

\section{A Greedy Frame Algorithm} \label{section:3}

This section develops an adaptive frame algorithm based on a recursion of the form
$$ y_{n+1} = y_{n} + \alpha_{n} \sum_{j\in J} \langle x-y_{n}, x_{j} \rangle x_{j}, $$

\noindent
in which $\alpha_{n}>0$ varies with $n$.  The relaxation parameter $\alpha_{n}$ will be chosen according to
$$ \alpha_{n} = \argmin_{\alpha > 0} \Vert x-y_{n+1} \Vert_{S}^{2}, $$

\noindent
where $\Vert \cdot\Vert_{S}$ is the equivalent norm on $\mathbb{H}$ associated with the inner product $ \langle x, y \rangle_{S} = \langle Sx, y \rangle$ for $x,y\in \mathbb{H}$.  This choice for $\alpha_{n}$ leads to the greatest improvement of the approximation in terms of $\Vert \cdot \Vert_{S}$ at each step.  The following elementary fact will simplify the proof of the main result in this section.

\begin{lemma*} 
Let $S$ be a positive definite operator on a Hilbert space $\mathbb{H}$ and define $\Vert \cdot \Vert_{S} = \sqrt{\langle S\cdot, \cdot \rangle}$.  If $p(S)$ is a polynomial in $S$, then $\Vert p(S)\Vert_{S} = \Vert p(S)\Vert$.
\end{lemma*}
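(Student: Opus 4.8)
The plan is to factor everything through the positive square root $R := S^{1/2}$. Since $S$ is positive definite, $R$ is a well-defined bounded positive operator with $R^{2}=S$, and for every $x\in\mathbb{H}$ one has $\Vert x\Vert_{S}^{2}=\langle Sx,x\rangle=\langle Rx,Rx\rangle=\Vert Rx\Vert^{2}$. In the setting of interest $S$ is a frame operator, so $S\ge A\,I$ and hence $R\ge\sqrt{A}\,I$ is boundedly invertible; consequently $R$ is an isometric isomorphism from $(\mathbb{H},\Vert\cdot\Vert_{S})$ onto $(\mathbb{H},\Vert\cdot\Vert)$. (For a general positive definite $S$ one instead uses that $R$ has dense range, which together with the boundedness of $p(S)$ is enough for the supremum computation below.)

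The first step is then a general conjugation identity: for any bounded operator $U$ on $\mathbb{H}$, the operator norm of $U$ with respect to $\Vert\cdot\Vert_{S}$ equals the usual operator norm of $RUR^{-1}$. This follows from
$$ \Vert U\Vert_{S} = \sup_{x\neq 0}\frac{\Vert Ux\Vert_{S}}{\Vert x\Vert_{S}} = \sup_{x\neq 0}\frac{\Vert RUx\Vert}{\Vert Rx\Vert} = \sup_{z\neq 0}\frac{\Vert RUR^{-1}z\Vert}{\Vert z\Vert} = \Vert RUR^{-1}\Vert, $$
using the substitution $z=Rx$.

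The second step is to observe that $R$ commutes with $p(S)$. Indeed $R=S^{1/2}$ lies in the commutative algebra generated by $S$ — for instance it is a norm limit of polynomials in $S$ on the compact spectrum of $S$ — and every polynomial in $S$ commutes with $p(S)$; hence $Rp(S)R^{-1}=p(S)$. Combining this with the identity of the first step yields $\Vert p(S)\Vert_{S}=\Vert Rp(S)R^{-1}\Vert=\Vert p(S)\Vert$, as desired.

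There is no real obstacle here; the only points needing a word of care are the (in our frame-theoretic setting automatic) invertibility of $R$ and the commutation of $R$ with $p(S)$, both of which are handled by the spectral calculus for the self-adjoint operator $S$. A slightly more computational variant avoids $R^{-1}$ in the intermediate steps: write $\Vert p(S)x\Vert_{S}^{2}=\langle S\,p(S)^{*}p(S)\,x,x\rangle$, use $S\,p(S)^{*}p(S)=|p(S)|^{2}S$ by commutativity, substitute $x=R^{-1}z$, and recognize the resulting supremum as $\Vert\,|p(S)|^{2}\Vert=\Vert p(S)\Vert^{2}$; this is the same argument in disguise.
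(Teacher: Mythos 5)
Your argument is correct, and although both proofs pivot on the positive square root $R=S^{1/2}$, the mechanism you use is genuinely different from the paper's. The paper computes the $S$-\emph{numerical radius}: it writes $\Vert p(S)\Vert_S$ as a supremum of $\vert\langle p(S)x,x\rangle_S\vert$ over normalized $x$, substitutes $x\mapsto S^{1/2}x$ to convert this into the ordinary numerical radius of $p(S)$, and identifies that quantity with $\Vert p(S)\Vert$; this last identification silently uses the standard fact that the numerical radius of a self-adjoint (or normal) operator equals its operator norm, which is where the hypothesis that $p(S)$ is a polynomial in the self-adjoint operator $S$ actually enters. You instead compute the $S$-operator norm directly as a similarity transform, $\Vert U\Vert_S=\Vert RUR^{-1}\Vert$, and then observe that $R$ commutes with $p(S)$, so the conjugation acts trivially. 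Your route buys two things: it avoids the numerical-radius identity entirely, so it applies verbatim to any bounded operator commuting with $S$ (normal or not, e.g. complex-coefficient polynomials), and it isolates precisely where invertibility of $R$ --- or density of its range --- is needed, a point the paper dispatches with a single clause about $S^{1/2}$ being invertible. The paper's route is shorter on the page but leans on an unstated classical fact about self-adjoint operators; yours is marginally longer but self-contained modulo the continuous functional calculus.
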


\begin{proof}
It follows from the spectral theorem that there exists a positive definite operator $A$ such that $A^{2}=S$.  It will be convenient to denote $A$ by $S^{\frac{1}{2}}$.  If $p(S)$ is a polynomial in $S$, then
$$ \Vert p(S) \Vert_{S} = \sup_{x\neq 0} \frac{\vert \langle p(S) x,x\rangle_{S}\vert}{\Vert x \Vert_{S}} 
                        = \sup_{x\neq 0} \frac{\vert \langle p(S)S^{\frac{1}{2}}x,S^{\frac{1}{2}}x\rangle \vert}{\Vert S^{\frac{1}{2}} x \Vert} 
                        = \sup_{x\neq 0} \frac{\vert \langle p(S)x,x\rangle \vert}{\Vert x \Vert} 
                        = \Vert p(S)\Vert, $$

\noindent
where the second to last equality follows from the fact that $S$ and, hence, $S^{\frac{1}{2}}$ are invertible.
\end{proof}

With this setup, a greedy version of the frame algorithm will now be described.

\begin{theorem}[Greedy Frame Algorithm] \label{thm:greedy}
Let $\lbrace x_{j} \rbrace_{j\in J}$ be a frame for $\mathbb{H}$ with frame operator $S$.  Fix $x \in \mathbb{H}$.  Let $y_{0}=0$ and, for $n\ge 0$, define
\begin{equation} \label{eq:greedy}
\begin{aligned}
\alpha_{n} &= \frac{\Vert S(x-y_{n})\Vert^{2}}{\Vert S(x-y_{n})\Vert_{S}^{2}} \\[4pt]
y_{n+1} &= y_{n} + \alpha_{n} S(x-y_{n}).
\end{aligned}
\end{equation}

\noindent
Then, $y_{n}$ converges to $x$ as $n\rightarrow \infty$.  Moreover, if the optimal lower and upper frame bounds for $\lbrace x_{j} \rbrace_{j\in J}$ are $A$ and $B$, respectively, then
\begin{equation} \label{eq:greedy-bound}
\Vert x-y_{n} \Vert_{S} \le \big ( \tfrac{B-A}{B+A} \big )^{n} \Vert x \Vert_{S}.
\end{equation}
\end{theorem}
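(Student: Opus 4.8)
The plan is to recast everything in terms of the error vector $e_n = x - y_n$ and exploit the fact that one step of the greedy recursion applies the operator $I - \alpha_n S$ to $e_n$, so that $e_{n+1} = (I - \alpha_n S)e_n$. The first step is to derive the formula for $\alpha_n$ by genuinely carrying out the minimization $\alpha_n = \argmin_{\alpha>0} \Vert e_n - \alpha S e_n\Vert_S^2$: expanding the square in the $S$-inner product gives a quadratic in $\alpha$, namely $\Vert e_n\Vert_S^2 - 2\alpha \langle S e_n, e_n\rangle_S + \alpha^2 \Vert S e_n\Vert_S^2$, and since $\langle S e_n, e_n\rangle_S = \langle S^{1/2}Se_n, S^{1/2}e_n\rangle = \langle Se_n, Se_n\rangle = \Vert Se_n\Vert^2$, the minimizer is exactly $\alpha_n = \Vert Se_n\Vert^2 / \Vert Se_n\Vert_S^2$, matching \eqref{eq:greedy}. (One should note $Se_n = 0$ forces $e_n = 0$ since $S$ is invertible, so if the algorithm has not already terminated, $\alpha_n$ is well-defined and positive.)

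The second step is the error estimate. Because the greedy step is optimal over all $\alpha > 0$, for \emph{any} fixed $\alpha$ one has $\Vert e_{n+1}\Vert_S = \Vert (I-\alpha_n S)e_n\Vert_S \le \Vert (I - \alpha S)e_n\Vert_S \le \Vert I - \alpha S\Vert_S \,\Vert e_n\Vert_S$. Now apply the Lemma: since $I - \alpha S$ is a polynomial in $S$, $\Vert I - \alpha S\Vert_S = \Vert I - \alpha S\Vert = \max\{|1-\alpha A|, |1 - \alpha B|\}$ by the spectral theorem (using that $A$ and $B$ are the optimal frame bounds, hence the endpoints of the spectrum of $S$). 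Choosing $\alpha = \frac{2}{A+B}$ gives $\Vert I - \alpha S\Vert = \frac{B-A}{B+A}$, so $\Vert e_{n+1}\Vert_S \le \frac{B-A}{B+A}\Vert e_n\Vert_S$. Iterating from $e_0 = x$ yields \eqref{eq:greedy-bound}.

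Convergence $y_n \to x$ is then immediate when $A < B$, since the contraction factor is strictly less than $1$ and $\Vert\cdot\Vert_S$ is equivalent to $\Vert\cdot\Vert$; the degenerate case $A = B$ (tight frame) should be handled separately, but there $\alpha_0 = 1/A$ and $y_1 = x$ in a single step, so convergence is trivial. The main obstacle — really the only subtle point — is the justification that $\Vert I - \alpha S\Vert = \max\{|1-\alpha A|,|1-\alpha B|\}$: this requires knowing that the spectrum of $S$ is contained in $[A,B]$ with $A$ and $B$ actually attained as spectral endpoints, which is precisely the content of $A, B$ being the \emph{optimal} frame bounds together with the spectral mapping theorem applied to the continuous function $t \mapsto 1 - \alpha t$. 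Everything else is routine algebra with the $S$-inner product.
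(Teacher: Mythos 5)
Your proposal is correct and follows essentially the same route as the paper: derive $\alpha_n$ as the minimizer of the quadratic $\Vert (I-\alpha S)(x-y_n)\Vert_S^2$, then use that optimality to compare against the fixed choice $\alpha=\tfrac{2}{A+B}$, invoking the Lemma to transfer $\Vert I-\alpha S\Vert_S=\Vert I-\alpha S\Vert\le\tfrac{B-A}{B+A}$. Your added remarks (well-definedness of $\alpha_n$, the tight-frame case, and the spectral justification of $\Vert I-\alpha S\Vert\le\max\lbrace\vert 1-\alpha A\vert,\vert 1-\alpha B\vert\rbrace$) are harmless refinements of the same argument.
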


\begin{proof}
The error of approximation at step $n+1$ is given in the $\Vert \cdot\Vert_{S}$ norm by
$$ \begin{aligned} \Vert x-y_{n+1} \Vert_{S}^{2} &= \Vert (I-\alpha_{n}S)(x-y_{n})\Vert_{S}^{2} \\
                                                 &= \langle (x-y_{n})-\alpha_{n}S(x-y_{n}), (x-y_{n})-\alpha_{n}S(x-y_{n})\rangle_{S} \\
                                                 &= \Vert x-y_{n}\Vert_{S}^{2} - 2\alpha_{n} \langle S(x-y_{n}),x-y_{n} \rangle_{S} + \alpha_{n}^{2} \Vert S(x-y_{n})\Vert_{S}^{2}. \end{aligned} $$

\noindent
The approximation error is thus a quadratic function of $\alpha_{n}$ with a global minimum at
$$ \alpha_{n} = \frac{\langle S(x-y_{n}),x-y_{n}\rangle_{S}}{\Vert S(x-y_{n})\Vert_{S}^{2}} = \frac{\Vert S(x-y_{n})\Vert^{2}}{\Vert S(x-y_{n})\Vert_{S}^{2}}. $$

\noindent
The lemma implies that the operator norm of $I-\alpha S$, $\alpha \in \mathbb{R}$, is unaffected by the change in norm, i.e.,
$$ \Vert I-\alpha S\Vert_{S} = \Vert I-\alpha S\Vert. $$

\noindent
Let $\beta=2/(A+B)$ where $A$ and $B$, respectively, are the optimal lower and upper frame bounds.  It follows that
$$ \begin{aligned}
   \Vert x-y_{n+1}\Vert_{S} &= \Vert (I-\alpha_{n}S)(x-y_{n})\Vert_{S} \\[1pt]
                        &= \min_{\alpha \in \mathbb{R}} \Vert (I-\alpha S)(x-y_{n})\Vert_{S} \\[1pt]
                        &\le \Vert  ( I - \beta S ) (x-y_{n}) \Vert_{S} \\[1pt]
                        &\le \big ( \tfrac{B-A}{B+A} \big ) \Vert x-y_{n} \Vert_{S}. 
   \end{aligned} $$

\noindent
This estimate justifies both the claim that $y_{n}$ converges to $x$ as well as the error bound \eqref{eq:greedy-bound}.
\end{proof}

A key advantage of the greedy frame algorithm lies in the fact that it achieves the same guaranteed rate of convergence as the traditional frame algorithm without any knowledge of the frame bounds.  It is also possible to formulate a greedy frame algorithm based on the choice
$$ \alpha_{n} = \argmin_{\alpha >0} \Vert x-y_{n+1} \Vert^{2}. $$

\noindent
The proof of Theorem \ref{thm:greedy} is easily adapted for the change in norm, so this alternative form of the greedy frame algorithm will be presented without proof.  Note, however, that the expression for the optimal relaxation parameter differs from \eqref{eq:greedy} in this case.

\begin{proposition} \label{thm:greedyprop}
Let $\lbrace x_{j} \rbrace_{j\in J}$ be a frame for $\mathbb{H}$ with frame operator $S$.  Fix $x \in \mathbb{H}$.  Let $y_{0}=0$ and, for $n\ge 0$, define
\begin{equation} \label{eq:alpha}
\begin{aligned}
\alpha_{n} &= \frac{\Vert x-y_{n}\Vert_{S}^{2}}{\Vert S(x-y_{n})\Vert^{2}} \\[4pt]
y_{n+1} &= y_{n} + \alpha_{n} S(x-y_{n}).
\end{aligned}
\end{equation}

\noindent
Then, $y_{n}$ converges to $x$ as $n\rightarrow \infty$.  Moreover, if the optimal lower and upper frame bounds for $\lbrace x_{j} \rbrace_{j\in J}$ are $A$ and $B$, respectively, then
\begin{equation} \label{eq:errorbound}
\Vert x-y_{n} \Vert \le \big ( \tfrac{B-A}{B+A} \big )^{n} \Vert x \Vert.
\end{equation}
\end{proposition}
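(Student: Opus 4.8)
The plan is to follow the proof of Theorem~\ref{thm:greedy} almost verbatim, simply interchanging the roles of $\Vert\cdot\Vert$ and $\Vert\cdot\Vert_{S}$. Writing $v_{n}=x-y_{n}$, the recursion \eqref{eq:alpha} gives $x-y_{n+1}=(I-\alpha_{n}S)v_{n}$, so expanding in the standard inner product and using $\langle Sv_{n},v_{n}\rangle=\Vert v_{n}\Vert_{S}^{2}$ yields
$$ \Vert x-y_{n+1}\Vert^{2}=\Vert v_{n}\Vert^{2}-2\alpha_{n}\langle Sv_{n},v_{n}\rangle+\alpha_{n}^{2}\Vert Sv_{n}\Vert^{2}=\Vert v_{n}\Vert^{2}-2\alpha_{n}\Vert v_{n}\Vert_{S}^{2}+\alpha_{n}^{2}\Vert Sv_{n}\Vert^{2}. $$
This is a quadratic in $\alpha_{n}$, strictly convex because $S$ is invertible (so $Sv_{n}\neq 0$ unless $v_{n}=0$), whose global minimizer is $\alpha_{n}=\Vert v_{n}\Vert_{S}^{2}/\Vert Sv_{n}\Vert^{2}$, matching \eqref{eq:alpha}. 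First I would dispose of the trivial case: if $v_{n}=0$ then $y_{n}=x$ and the algorithm has already converged; otherwise $\langle Sv_{n},v_{n}\rangle>0$ since $S$ is positive definite, so $\alpha_{n}>0$ and the constraint $\alpha>0$ is inactive --- the minimum over $\alpha>0$ equals the unconstrained minimum over $\alpha\in\mathbb{R}$.

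Next I would estimate this minimum by a single convenient competitor. With $\beta=2/(A+B)$, where $A\le B$ are the \emph{optimal} frame bounds, the spectrum of $S$ lies in $[A,B]$, so the classical frame-algorithm computation gives $\Vert I-\beta S\Vert=\max\{\vert 1-\beta A\vert,\vert 1-\beta B\vert\}=\tfrac{B-A}{B+A}$. Since the square root is monotone,
$$ \Vert x-y_{n+1}\Vert=\min_{\alpha\in\mathbb{R}}\Vert(I-\alpha S)v_{n}\Vert\le\Vert(I-\beta S)v_{n}\Vert\le\Vert I-\beta S\Vert\,\Vert v_{n}\Vert\le\big(\tfrac{B-A}{B+A}\big)\Vert x-y_{n}\Vert. $$
In contrast to the proof of Theorem~\ref{thm:greedy}, the norm-equivalence lemma is not needed here, as both the error and the operator $I-\beta S$ are measured in the standard norm from the outset. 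Iterating this one-step contraction from $y_{0}=0$ gives $\Vert x-y_{n}\Vert\le(\tfrac{B-A}{B+A})^{n}\Vert x\Vert$, which is \eqref{eq:errorbound}, and because $\tfrac{B-A}{B+A}<1$ this also forces $y_{n}\to x$.

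I do not anticipate a genuine obstacle: the argument is a mild variant of one already carried out. The only points needing care are the well-definedness and positivity of $\alpha_{n}$ --- that is, handling $S(x-y_{n})=0$, which by invertibility of $S$ means $x=y_{n}$ --- and the observation that it is the \emph{optimality} of the frame bounds that makes $\Vert I-\beta S\Vert$ equal to $\tfrac{B-A}{B+A}$ rather than merely bounded by some larger quantity.
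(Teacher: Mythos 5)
Your proof is correct and is precisely the adaptation of the proof of Theorem \ref{thm:greedy} that the paper asserts is routine and therefore omits: expand $\Vert (I-\alpha S)(x-y_{n})\Vert^{2}$ as a quadratic in $\alpha$, identify the minimizer as the $\alpha_{n}$ of \eqref{eq:alpha}, bound the minimum by the competitor $\beta = 2/(A+B)$, and iterate. Your two side observations --- that the norm-equivalence lemma is not needed once everything is measured in the standard norm, and that the degenerate case $S(x-y_{n})=0$ forces $x=y_{n}$ --- are both accurate and tighten the argument slightly beyond what a verbatim transposition would give.
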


The motivation to emphasize the greedy frame algorithm of Theorem \ref{thm:greedy} over that of Proposition \ref{thm:greedyprop} relates to the ease of implementation with noisy measurements, which will be discussed in the next section.

\section{Robustness of the Greedy Frame Algorithm} \label{section:4}

It is important to show that the adaptive algorithm presented in Theorem \ref{thm:greedy} can be be implemented based on noisy measurements of the frame coefficients without substantial loss in performance.  In particular, it is essential that the optimal relaxation parameter at step $n$ be computable in terms of the noisy measurements.  The main result of this section, stated below, demonstrates the fact that the greedy frame algorithm of Theorem \ref{thm:greedy} is robust to measurement error.

\begin{theorem} \label{thm:robust}
Let $\lbrace x_{j} \rbrace_{j\in J}$ be a frame for $\mathbb{H}$ with analysis operator $T$ and optimal lower and upper frame bounds $A$ and $B$, respectively.  Let $S$ represent the frame operator, $T^{*}T$.  Fix $x\in \mathbb{H}$ and $\delta_{0} >0$.  Let $c\in \ell^{2}(J)$ such that $\Vert Tx-c\Vert_{2} \le \delta_{0}$, define $y_{0}=0$ and, for $n\ge 0$, 
\begin{equation} \label{eq:greedy-2}
\begin{aligned}
\alpha_{n} &= \frac{\Vert T^{*}(c-Ty_{n})\Vert^{2}}{\Vert T^{*}(c-Ty_{n})\Vert_{S}^{2}} \\[4pt]
y_{n+1} &= y_{n} + \alpha_{n} T^{*}(c-Ty_{n}).
\end{aligned}
\end{equation}

\noindent
Then, 
\begin{equation} \label{eq:bound-2}
\Vert x-y_{n} \Vert_{S} \le \left ( \frac{B-A}{B+A} \right )^{n} \left ( \Vert x \Vert_{S} + 2\delta_{0}\right ) +  2\delta_{0}, \qquad n\ge 1.
\end{equation}
\end{theorem}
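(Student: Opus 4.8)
The plan is to reduce the noisy recursion to the noise-free greedy iteration applied to a perturbed target, and then to control the perturbation uniformly. Write $c = Tx + e$ with $\|e\|_2 \le \delta_0$. The key observation is that the recursion \eqref{eq:greedy-2} is exactly the greedy frame algorithm of Theorem \ref{thm:greedy} applied to a vector whose frame coefficients are $c$, but since $c$ need not lie in the range of $T$, I would instead work with $T^*c = Sx + T^*e \in \mathbb{H}$ and set $\tilde x = x + S^{-1}T^*e$, so that $S\tilde x = T^*c$. Then $T^*(c - Ty_n) = S(\tilde x - y_n)$, and \eqref{eq:greedy-2} becomes precisely the iteration \eqref{eq:greedy} with $x$ replaced by $\tilde x$. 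By Theorem \ref{thm:greedy}, $\|\tilde x - y_n\|_S \le \big(\tfrac{B-A}{B+A}\big)^n \|\tilde x\|_S$.

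Next I would pass back from $\tilde x$ to $x$ using the triangle inequality in $\|\cdot\|_S$: on the one hand $\|x - y_n\|_S \le \|\tilde x - y_n\|_S + \|\tilde x - x\|_S$, and on the other $\|\tilde x\|_S \le \|x\|_S + \|\tilde x - x\|_S$. So everything hinges on bounding the single quantity $\|\tilde x - x\|_S = \|S^{-1}T^*e\|_S$. Using $\|v\|_S^2 = \langle Sv, v\rangle$ with $v = S^{-1}T^*e$ gives $\|S^{-1}T^*e\|_S^2 = \langle T^*e, S^{-1}T^*e\rangle = \langle e, TS^{-1}T^*e\rangle_{\ell^2}$. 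The operator $TS^{-1}T^*$ is the orthogonal projection of $\ell^2(J)$ onto $\operatorname{ran}(T)$ (this is a standard frame-theoretic identity), hence has operator norm $1$, so $\|S^{-1}T^*e\|_S^2 \le \|e\|_2 \|TS^{-1}T^*e\|_2 \le \|e\|_2^2 \le \delta_0^2$. Alternatively, and perhaps more cleanly for the write-up, one can note $\|S^{-1}T^*e\|_S^2 = \|S^{-1/2}T^*e\|^2 = \langle S^{-1}T^*e, T^*e\rangle = \|(T^*)^\dagger \text{-type bound}\| \le \|e\|_2^2$ directly from $\|S^{-1/2}T^*\| = \|(S^{-1/2}T^*)(S^{-1/2}T^*)^*\|^{1/2} = \|S^{-1/2}SS^{-1/2}\|^{1/2} = 1$. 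Either way, $\|\tilde x - x\|_S \le \delta_0$.

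Combining: $\|x - y_n\|_S \le \big(\tfrac{B-A}{B+A}\big)^n\|\tilde x\|_S + \delta_0 \le \big(\tfrac{B-A}{B+A}\big)^n(\|x\|_S + \delta_0) + \delta_0$, which is even slightly sharper than \eqref{eq:bound-2}; replacing $\delta_0$ by $2\delta_0$ in the two places where it appears yields the stated inequality with room to spare, so no delicate estimation is needed. I expect the only real subtlety to be the identification $\|S^{-1/2}T^*\| = 1$ (equivalently, that $TS^{-1}T^*$ is a projection); this is the one place where the specific structure $S = T^*T$ — rather than merely "$S$ positive definite" — is used, and it is exactly why the theorem is phrased in terms of the analysis operator $T$ and the coefficient error $\|Tx - c\|_2$ rather than an abstract perturbation of $Sx$. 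Everything else is the triangle inequality and a direct appeal to Theorem \ref{thm:greedy}.
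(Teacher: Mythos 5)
Your argument is correct and follows essentially the same route as the paper: both introduce the auxiliary vector $\tilde{x}$ whose frame coefficients are the orthogonal projection $Pc$ of $c$ onto $\operatorname{ran}T$ (your $\tilde{x}=x+S^{-1}T^{*}e$ is exactly this vector, since $S\tilde{x}=T^{*}c$ is equivalent to $T\tilde{x}=Pc$), both observe that \eqref{eq:greedy-2} is precisely the noise-free greedy iteration \eqref{eq:greedy} with target $\tilde{x}$ so that $\Vert \tilde{x}-y_{n}\Vert_{S}\le \big(\tfrac{B-A}{B+A}\big)^{n}\Vert \tilde{x}\Vert_{S}$, and both finish with the triangle inequality. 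The one point of genuine divergence is the estimate of $\Vert x-\tilde{x}\Vert_{S}$: the paper writes $\Vert x-\tilde{x}\Vert_{S}=\Vert T(x-\tilde{x})\Vert=\Vert (Tx-c)-(T\tilde{x}-c)\Vert$ and bounds each term by $\delta_{0}$ (using the best-approximation property of $Pc$), obtaining $2\delta_{0}$; you instead compute $\Vert S^{-1}T^{*}e\Vert_{S}^{2}=\langle e,TS^{-1}T^{*}e\rangle=\Vert Pe\Vert^{2}\le \delta_{0}^{2}$, which is sharper by a factor of $2$ and shows that \eqref{eq:bound-2} actually holds with $\delta_{0}$ in place of $2\delta_{0}$ in both occurrences. (The same improvement is available in the paper's notation by noting $T(x-\tilde{x})=Tx-Pc=-Pe$.) Since the stated bound is weaker than what you prove, your proposal establishes the theorem; the only cosmetic caveat is the garbled aside about ``$(T^{*})^{\dagger}$-type'' bounds, which you should replace by the clean computation $\Vert S^{-1/2}T^{*}\Vert^{2}=\Vert S^{-1/2}SS^{-1/2}\Vert=1$ that you already supply.
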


\begin{proof}
Fix $x\in \mathbb{H}$ and $c\in \ell^{2}(J)$ such that $\Vert Tx-c\Vert_{2} \le \delta_{0}$.  Define $\delta \in \ell^{2}(J)$ such that $c = Tx + \delta$ and observe that $\Vert \delta\Vert \le \delta_{0}$.  Let $P$ represent the orthogonal projection onto the range of $T$ and define $\tilde{x} \in \mathbb{H}$ so that $T\tilde{x}=Pc$, which implies that $c = T\tilde{x} + \tilde{c}$ where $\tilde{c}$ belongs to the null space of $T{^*}$.  With this setup, it follows from the second part of \eqref{eq:greedy-2} that
$$ \begin{aligned} 
    \tilde{x} - y_{n+1} &= \tilde{x} - y_{n} - \alpha_{n} T^{*}(c-T y_{n}) \\
                        &= \tilde{x} - y_{n} - \alpha_{n} T^{*}(T\tilde{x} + \tilde{c} - T y_{n}) \\
                        &= \tilde{x} - y_{n} - \alpha_{n} S(\tilde{x}-y_{n}) \\
                        &= (I-\alpha_{n}S) (\tilde{x}-y_{n}). 
    \end{aligned} $$

\noindent
In light of the previous calculation, one can write
$$ \Vert \tilde{x}-y_{n+1} \Vert_{S}^{2} = \Vert \tilde{x}-y_{n} \Vert_{S}^{2} - 2\alpha_{n} \langle S(\tilde{x}-y_{n}),\tilde{x}-y_{n}\rangle_{S} + \alpha_{n}^{2} \Vert S(\tilde{x}-y_{n}) \Vert_{S}^{2}. $$

\noindent
This expression for the error of approximation is minimized when
$$ \alpha_{n} = \frac{\Vert S(\tilde{x}-y_{n})\Vert^{2}}{\Vert S(\tilde{x}-y_{n})\Vert_{S}^{2}} = \frac{\Vert T^{*}(c-Ty_{n})\Vert^{2}}{\Vert T^{*}(c-Ty_{n})\Vert_{S}^{2}}$$

\noindent
and, repeating the argument of Theorem \ref{thm:greedy}, the error of approximation must satisfy
$$ \Vert \tilde{x}-y_{n+1} \Vert_{S} \le \left ( \frac{B-A}{B+A} \right ) \Vert \tilde{x}-y_{n} \Vert_{S}.$$

\noindent
It follows that
$$ \Vert \tilde{x}-y_{n} \Vert_{S} \le \left ( \frac{B-A}{B+A} \right )^{n} \Vert \tilde{x}-y_{0} \Vert_{S} = \left ( \frac{B-A}{B+A} \right )^{n} \Vert \tilde{x} \Vert_{S}, \qquad n \ge 1.$$

\noindent
It remains to relate this bound to the original vector $x$.  Since $T\tilde{x}=Pc$ is the orthogonal projection of $c$ onto the range of $T$, it follows that
$$ \Vert T\tilde{x} - c \Vert \le \Vert Tx - c \Vert \le \delta_{0}. $$

\noindent
In order to bound $\Vert x-\tilde{x}\Vert_{S}$, one can write
$$ \Vert x-\tilde{x} \Vert_{S}^{2} = \langle  S(x-\tilde{x}),  x-\tilde{x} \rangle = \Vert T(x-\tilde{x})\Vert^{2}, $$

\noindent
which shows that 
$$ \Vert x-\tilde{x}\Vert_{S} = \Vert T(x-\tilde{x})\Vert = \Vert (Tx-c)-(T\tilde{x}-c)\Vert \le 2\delta_{0}. $$ 

\noindent
Finally, combining this with the fact that $\Vert x-y_{n} \Vert_{S} \le \Vert \tilde{x}-y_{n}\Vert_{S} + \Vert x-\tilde{x}\Vert_{S}$ due to the triangle inequality, it follows that
$$ \Vert x-y_{n}\Vert_{S} \le \left ( \frac{B-A}{B+A} \right )^{n} \Vert \tilde{x} \Vert_{S} + 2 \delta_{0} \le \left ( \frac{B-A}{B+A} \right )^{n} \left ( \Vert x \Vert_{S} + 2\delta_{0} \right ) + 2 \delta_{0}, $$

\noindent
completing the proof of \eqref{eq:bound-2}.
\end{proof}

\begin{remark*}
Notice that the optimal parameter $\alpha_{n}$ described by \eqref{eq:greedy-2} is computable in terms of the sequence of noisy measurements, $c$.  This is not guaranteed for the adaptive frame algorithm described in Proposition \ref{thm:greedyprop}.  Adopting the notation used in the proof of Theorem \ref{thm:robust}, the error estimate in the standard norm becomes
$$ \Vert \tilde{x}-y_{n+1} \Vert^{2} = \Vert \tilde{x}-y_{n} \Vert^{2} - 2\alpha_{n} \langle S(\tilde{x}-y_{n}),\tilde{x}-y_{n}\rangle + \alpha_{n}^{2} \Vert S(\tilde{x}-y_{n}) \Vert^{2}, $$

\noindent
which is minimized when 
$$ \alpha_{n} = \frac{\Vert \tilde{x}-y_{n}\Vert_{S}^{2}}{\Vert S(\tilde{x}-y_{n})\Vert^{2}}. $$

\noindent
The expression in the denominator is easily computed in terms of the noisy measurements via
$$ \Vert S(\tilde{x}-y_{n})\Vert^{2} = \Vert T^{*}(T\tilde{x}-Ty_{n})\Vert^{2} = \Vert T^{*}(c-Ty_{n})\Vert^{2}, $$

\noindent
however, the expression in the numerator relies on knowledge of $\tilde{c}$, due to the fact that
$$ \Vert \tilde{x}-y_{n}\Vert_{S}^{2} = \Vert T(\tilde{x}-y_{n}) \Vert^{2} = \Vert c-\tilde{c}-Ty_{n}\Vert^{2} = \Vert c-Ty_{n}\Vert^{2} - \Vert \tilde{c} \Vert^{2}. $$

\noindent
Here, the last simplification makes use of the fact that $\langle c,\tilde{c}\rangle = \Vert \tilde{c}\Vert^{2}$.  Thus the fact that $\tilde{c}$ cannot be readily computed presents an obstacle in the selection of the optimal relaxation parameter when the algorithm of Proposition \ref{thm:greedyprop} is implemented with noisy measurements.
\end{remark*}

\section{Numerical Examples} \label{section:5}

Two numerical examples will be used to illustrate the potential benefits of the greedy frame algorithm described in Theorems \ref{thm:greedy} and \ref{thm:robust}.  The underlying frame for each example will be a Parseval frame ($A=B=1$) resulting from a random projection of an orthonormal discrete cosine basis of $\mathbb{R}^{N}$ into $\mathbb{R}^{d}$, where $N>d$.  The discrete cosine basis $\lbrace e_{j}\rbrace_{j=0}^{N-1}$ will be defined by
$$ e_{j}(k) = \begin{cases}  \sqrt{\tfrac{1}{N}}, & j=0 \\ \sqrt{\tfrac{2}{N}} \cos{\left ( \tfrac{\pi}{N} (j+\tfrac{1}{2}) k\right )}, & 1\le j\le N-1, \end{cases}  \qquad 0\le k\le N-1. $$

\noindent
The Parseval frame is obtained by projection onto $d$ randomly selected coordinates, chosen from a uniform distribution.  Each experiment will summarize the results of 1000 trials in terms of the mean error of approximation over the first 50 iterations of the corresponding reconstruction algorithms for a unit vector in $\mathbb{R}^{d}$.  The unit vector will be chosen from a uniform distribution and both a random frame and unit vector are selected in each trial.  Shaded error bars will indicate the 10th and 90th percentiles for the error of approximation over the 1000 trials.

\begin{example} \label{eg1}
The main advantage of the greedy frame algorithm lies in the fact that it does not require knowledge of the frame bounds.  To illustrate the utility of this trait, consider a frame inversion problem in which highly redundant, noisy measurements are exposed to a number of erasures.  The remaining frame coefficients are no longer associated with a Parseval frame and the lower frame bound for the corresponding frame vectors is unknown.  For this reason, the classical frame algorithm will be implemented using the relaxation constant $\alpha=1$ based on the fact that the upper frame bound is at most 1.  Fix $d=100$ and $N=200$.  The noisy frame measurements $c\in \mathbb{R}^{N}$ are determined by $c=Tx+e$, where the coordinates of $e$ are chosen according to the standard normal distribution and then renormalized so that $\Vert e\Vert_{2} = 10^{-6}$.  Following the addition of noise, 10 frame coefficients are randomly erased.  The greedy frame algorithm will implement an adaptive relaxation parameter computed according to \eqref{eq:greedy-2}, where $S$ now represents the frame operator associated with the frame vectors remaining after the erasures.  The results of this experiment are shown in Figure \ref{fig:eg1}.  Between Iteration 1 and Iteration 15, the mean rate of error reduction was approximately 0.64 for the classical frame algorithm and 0.47 for the greedy frame algorithm.  The error of approximation for the greedy algorithm bottoms out after roughly 20 iterations, while the classical algorithm requires a little less than twice that number.

 \begin{figure}[ht]
 \centering
 \includegraphics[width=0.625\textwidth]{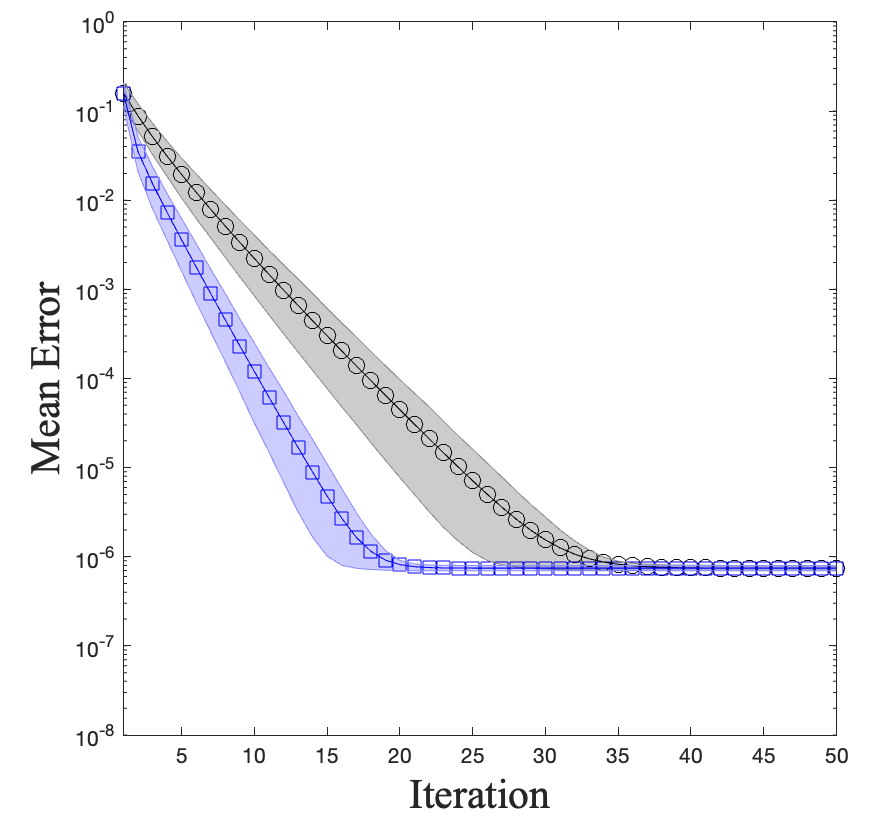}
 \caption{Error of approximation for the classical frame algorithm (black) and the greedy frame algorithm (blue) subject to both erasures and noisy measurements.} \label{fig:eg1}
 \end{figure}
\end{example}

\begin{example} \label{eg2}
Saturation recovery (see \cite{AFGJR2025}) provides another context in which the greedy algorithm can be leveraged to an advantage.  In real-valued saturation recovery, the goal is to reconstruct a vector $x$ from its saturated frame coefficients, $\lbrace \phi_{\lambda}(\langle x,x_{j}\rangle ) \rbrace_{j\in J}$, where $\lambda >0$ and 
$$ \phi_{\lambda}(t) = \begin{cases} -\lambda, & t\le -\lambda \\ t, & -\lambda < t < \lambda, \\ \lambda, & t\ge \lambda. \end{cases} $$

\noindent
It is possible to reconstruct $x$ from its saturated frame coefficients provided that the frame vectors associated with the unsaturated coordinates form a frame \cite[Theorem 5.2]{AFGJR2025}.  This is accomplished via the so called \emph{$\lambda$-saturated frame algorithm}, which utilizes the recursive formula
$$ y_{k+1} = y_{k} + \frac{2}{A+B} \sum_{j\in J_{\lambda}^{\sharp}(x)\cup J_{S}(x,y_{k})} \left ( \phi_{\lambda}(\langle x,x_{j}\rangle) - \langle y,x_{j} \rangle \right ) x_{j}, $$

\noindent
where $A$ and $B$ are upper and lower frame bounds for $\lbrace x_{j} \rbrace_{j\in J}$ and the index sets $J_{\lambda}^{\sharp}(x)$ and $J_{S}(x,y)$ are defined by
$$ \begin{aligned}
  J_{\lambda}^{\sharp}(x) &= \left \lbrace j\in J : \vert \langle x, x_{j} \rangle \vert <\lambda \right \rbrace \\
  J_{S}(x,y) & = \left \lbrace j\in J : \langle x,x_{j} \rangle \le -\lambda < \langle y,x_{j}\rangle  \text{ or } \langle y, x_{j}\rangle < \lambda \le \langle x,x_{j} \rangle \right \rbrace. \end{aligned} $$

\noindent
In this example, the underlying frame is Parseval, so $A=B=1$ and $\tfrac{2}{A+B}=1$.  An adaptive version of the $\lambda$-saturated frame algorithm is obtained by choosing the relaxation parameter $\alpha_{k}$ based on \eqref{eq:greedy} with the understanding that $S$ corresponds to the frame operator of $\lbrace x_{j} \rbrace_{j\in J_{\lambda}^{\sharp}(x)\cup J_{S}(x,y_{k})}$.  Fix $d=100$, $N=250$, and $\lambda=0.08$.  The results of this experiment are shown in Figure \ref{fig:eg2}.  Between Iteration 1 and Iteration 50, the mean rate of error reduction was approximately 0.85 for the $\lambda$-saturated frame algorithm, but improved to 0.76 with the greedy implementation.  

 \begin{figure}[ht]
 \centering
 \includegraphics[width=0.625\textwidth]{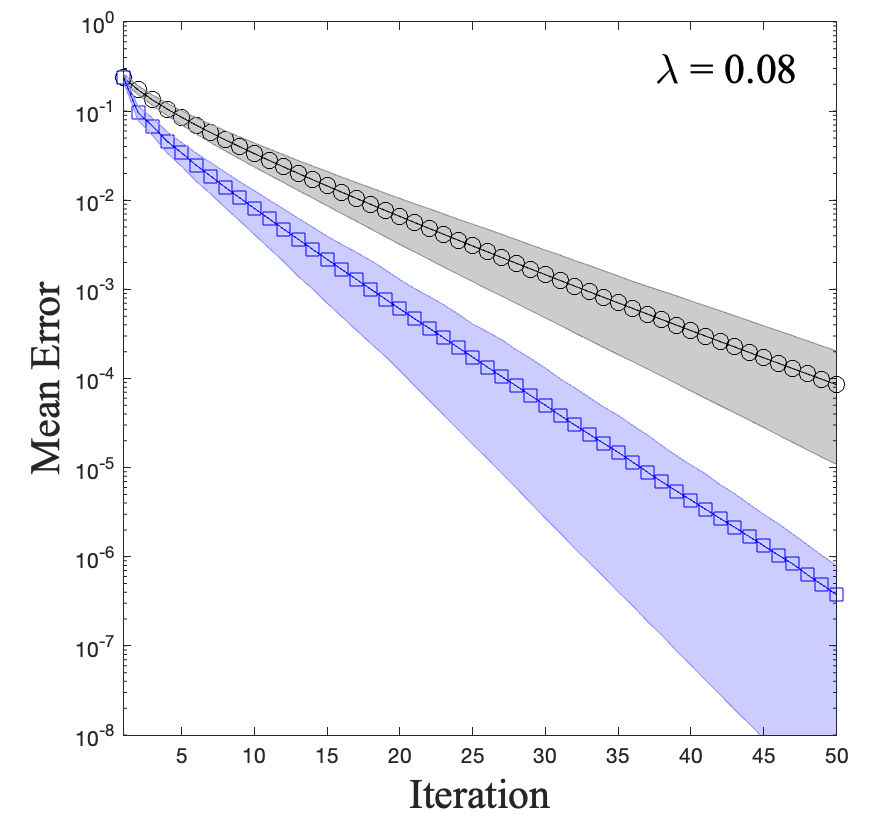}
 \caption{Error of approximation for the $\lambda$-saturated frame algorithm (black) and its greedy counterpart (blue) subject to saturation level $\lambda = 0.08$.} \label{fig:eg2}
 \end{figure}
\end{example}


\begin{thebibliography}{99}
\bibitem{AFGJR2025}
Alharbi, W., Freeman, D., Ghoreishi, D., Johnson, B., and Randrianrivony, N.,
\newblock \emph{Declipping and the Recovery of Vectors From Saturated Measurements},
\newblock arXiv: 2402.03237

\bibitem{DuffinSchaeffer1952}  
Duffin, R.~J. and Schaeffer, A.~C.,
\newblock \emph{A class of nonharmonic Fourier series,} 
\newblock Trans. Amer. Math. Soc., {\bf 72}, (1952), 147-158.

\bibitem{Grochenig1993}
Gr\"ochenig, Karlheinz,
\newblock \emph{Acceleration of the Frame Algorithm},
\newblock IEEE Trans. Signal Process., \textbf{41}(12), (1993), 3331--3340.

\bibitem{DMYoung1980}
Young, D.~M.,
\newblock \emph{Iterative Solution of Large Linear Systems},
\newblock Academic Press, (1980).

\end{thebibliography}
\end{document}